\documentclass[a4paper,12pt]{article}
\usepackage{amsmath, amsthm}
\usepackage{amsfonts}
\usepackage{amssymb}
\usepackage[utf8]{inputenc}
\usepackage[T1]{fontenc}
\usepackage{slashed}
\usepackage{commath}
\usepackage{graphicx}
\usepackage{floatrow}
\floatsetup[figure]{capbesideposition=right}
\usepackage[affil-it]{authblk}
\usepackage[a4paper, width=5.7in]{geometry}
\usepackage{setspace}
\setstretch{1.2}
\usepackage{hyperref}
\usepackage{array}

\renewcommand{\emptyset}{\varnothing}

\newcommand{\GA}[2][]{\ensuremath{\mathcal{G}_{#1}({#2})}}
\newcommand{\grade}[2]{\ensuremath{\langle#2\rangle_{#1}}}

\newcommand{\reverse}[1]{\tilde{#1}} 
\newcommand{\reals}[1]{\ensuremath{\mathbb{R}^{#1}}}
\newcommand{\vol}[1]{\ensuremath{\mathrm{vol}(#1)}}
\newcommand{\linner}{\mathbin{\ensuremath{\rfloor}}}
\newcommand{\rinner}{\mathbin{\ensuremath{\lfloor}}}

\newtheorem{thm}{Theorem}
\newtheorem{lem}[thm]{Lemma}
\newtheorem{cor}[thm]{Corollary}

\theoremstyle{definition}
\newtheorem{defn}{Definition}

\newcommand{\comment}[1]{}

\providecommand{\abs}[1]{\lvert#1\rvert}
\providecommand{\norm}[1]{\left\lVert#1\right\rVert}

\title{Coordinate free integrals in Geometric Calculus}

\begin{document}
	
\author{Timo Alho\\ e-mail: \url{alho@hi.is}}
\affil{Science Institute, University of Iceland\\
	 Dunhaga 5, 107 Reykjavik, Iceland}
\maketitle


\begin{abstract}
	We introduce a method for evaluating integrals in geometric calculus without introducing coordinates, based on using the fundamental theorem of calculus repeatedly and cutting the resulting manifolds so as to create a boundary and allow for the existence of an antiderivative at each step.
	The method is a direct generalization of the usual method of integration on \reals{}. It may lead to both practical applications and help unveil new connections to various fields of mathematics.
\end{abstract}

\newpage

\section{Introduction}

One of the main selling points for Geometric Algebra and Calculus \cite{Hestenes03oerstedmedal, Hestenes03spacetimephysics, GANF, GAMath, doran2007geometric, Gull:1993aa, Lasenby:1993ya, Doran:1993pja, Lasenby:1998yq, UnifiedLanguage} is the claim that it allows carrying out computations in inner product spaces without resorting to coordinates.
Indeed, there exist well developed methods for simplifying algebraic statements and solving equations, computing the vector derivative and the multivector derivative, and finally for developing a theory of directed integration, all in a coordinate free manner.
However, when it comes to actually computing the value of an integral, a coordinate system is invariably introduced \cite{doran2007geometric, GAMath, macdonald2012vector}.
This paper takes key steps towards remedying this.

In calculus on $\mathbb{R}$, definite integration is usually carried out by finding an antiderivative or an indefinite integral of the function to be integrated, and then applying the fundamental theorem of calculus to obtain the desired definite integral.
The fundamental theorem of geometric calculus \cite{GAMath, FundamentalTheorem}, a version of which can be expressed as
\begin{equation}
\int_M \mathrm{d}^m x\ \partial_{M} F = \int_{\partial M} \mathrm{d}^{m-1}x\ F,\label{eq:fund}
\end{equation} 
where $\partial_M$ is the vector derivative on the manifold $M$, provides a tool to do the same in any number of dimensions, for functions with values in the geometric algebra.

Let us briefly recall the main elements in \eqref{eq:fund}. In a directed integral, the integration measure $\dif^m x$ is an $m$-vector valued element of the tangent algebra of $M$, analogous to the volume form in the theory of differential forms. When $M$ is embedded in a higher dimensional manifold, the directed integral therefore carries more information than the usual integral with a scalar valued measure, including information about the orientation of the manifold that the integral is over, weighted by the integrand.

The vector derivative on a manifold, $\partial_M$, is a vector-valued derivative operator, and so in addition to taking derivatives it acts algebraically as a vector. On a manifold, it only considers differences along the manifold, but note that the result of the derivation can take values in the full geometric algebra, so it is distinct from the covariant derivative. In coordinates, one can define $\partial_M = \sum_i p_{M_x}(e^i) \frac{\partial}{\partial x^i}$, where $p_{M_x}(a)$ is the projection of the vector $a$ to the tangent space of the manifold at point $x$. In what follows, we usually suppress $x$ in the notation, and also $M$ where the manifold is clear from context.

Let us present a summary of the method we are proposing, to be elaborated on in the rest of the paper:
assume we are integrating a function $f(x)$ over a $d$-dimensional subset $M$ of $\mathbb{R}^d$, which is sufficiently smooth to satisfy the assumptions of the fundamental theorem and has a finite number of connected components.
The first step is to find an antiderivative  $F_1(x)$ of $f(x)$, i.e. $\partial_M F_1(x) = f(x)$ for all $x \in M$.
Now we get, according to \eqref{eq:fund}, an integral over the $d-1$ dimensional boundary $\partial M$ of $M$.
We'd like to use the fundamental theorem again,  and so we look for an antiderivative $F_2(x)$ of $F_1(x)$ on the boundary $\partial M$ with respect to the derivative $\partial_{\partial M}$ on the boundary.
Given an antiderivative $F_2(x)$ we run into the problem that the boundary of the boundary of a set is always empty.
We move forward by making an incision of the boundary, i.e. we choose a set $E_2$ such that $\partial M \setminus E_2$ has a smooth boundary  $\partial_{E_2} \partial M := \partial (\partial M \setminus E_2)$, and $\mathrm{vol}(E_2) < \epsilon_2$.
Now the integral
\begin{equation}
 \int_{\partial_{E_2}\partial M} d^{d-2}x F_2(x)
\end{equation}
differs from our desired integral by at most $\mathrm{vol}(E_2) \sup_{x \in E_2}\norm{F_2(x)}$.
Notice that we have to choose $F_2$ and $E_2$ such that $F_2$ is continuous in $\partial M \setminus E_2$, in order to justify our use of the fundamental theorem.
This requirement is actually crucial, since any finite value of the integral as we shrink $\epsilon_2$ to zero comes from what are essentially branch cut discontinuities in the antiderivative.
Indeed, due to the presence of branch cuts, we could not have found $F_2(x)$ on the whole manifold, giving a second reason why the incision is necessary.

We then simply repeat the same construction $d$ times, at each step requiring that for incision $E_n$ the volume $\mathrm{vol}(E_n) < \epsilon_n$ and that each antiderivative is continuous in the integration set.
In the final step, the integration will be over a one-dimensional manifold, which simply has a finite number of points as a boundary, leaving us with a finite sum of values of the $d$th antiderivative. Then as we let all of the $\epsilon_n$'s go to zero, we get our final result. 

As will be shown via examples, this method allows computing integrals without invoking a coordinate system.
However, we will find in all practical examples that we do need to invoke reference vectors or multivectors, and the expectation is indeed that this will turn out to be generic, as the reference multivectors provide a mechanism for choosing a specific antiderivative. 

We expect that this method of integration will open up new possibilities in analyzing any integral or differential systems in $n$-dimensions.
This includes the theory of partial differential equations\footnote{
	When such equations are expressed in geometric calculus, we follow \cite{GAMath} in considering this a misnomer, and prefer the term vector differential equation.}, 
numerical estimation methods for integrals, and also connections to algebraic geometry, since it becomes possible, at least in principle, to handle all aspects of surfaces expressible as algebraic equations in a coordinate independent manner.

In this paper, we first prove that when the requisite antiderivatives and submanifolds exist and satisfy a number of reasonable properties, the above construction indeed gives the desired result.
We then give some examples of elementary integrals worked out according to the method.
Finally, we elaborate on possible implications and directions for further research.

\section{Integration by antiderivatives}

Let us briefly recall some definitions and establish some notation.

Our basic notation follows that used by \cite{Chisolm:2012aa}. We use the left- and right contractions $\rinner$ and $\linner$ instead of the single dot product, our scalar product contains the reverse, $A \ast B = \grade{}{A \reverse{B}}$, and our dual is a right multiplication by the pseudoscalar.
The norm on a geometric algebra is defined as $\norm{A}^2 = A\ast A = \grade{}{A \reverse{A}}$.
Although our method generalizes easily to the case of mixed signatures, we will for simplicity consider here only spaces where the inner product is positive definite, and so the multivector norm defines a well-behaved concept of convergence.

Since the directed integral of a multivector function can always be expanded in a multivector basis in terms of scalar coefficient functions, we can import the concept of integrability from scalar valued integrals:
\begin{defn}
	A function $f: M \to \GA[M]{x}$ is L-integrable in the sense of the directed integral on $M$ if each of the scalar functions $a^I(x) \ast (\frac{\dif^m x}{\norm{\dif^m x}} f(x))$ are L-integrable on $M$ with the measure $\norm{\dif^m x}$ for all $a^I$, where $I$ is a multi-index and the set $\{a^I(x)\}$ forms a multivector basis \cite{GAMath, Chisolm:2012aa} of $\GA[M]{x}$, and L is a definition of integrability for scalar valued functions, such as Riemann or Lebesgue.
\end{defn}
In what follows, we will simply refer to integrability, and by that mean integrability in the sense of the directed integral based on a suitable definition of scalar integrability. For all the theorems and examples in this paper, the Riemann integral will be sufficient.

We write $\vol{M}$ for the volume of a manifold in the appropriate dimension, i.e. for $\dim(M) = 2$ the volume is the area, and so on.

For completeness, let us recall the definition of the tangent algebra and the vector derivative \cite{GAMath}:
\begin{defn}
	Let $M$ be a Euclidean vector manifold \cite{GAMath}. Then the \emph{tangent algebra} of $M$ at $x \in M$, denoted by \GA[M]{x} is the geometric algebra, i.e. real Clifford algebra, generated by the tangent space $T_x M$.
\end{defn}

\begin{defn}
Given a vector derivative $\partial_M$ on an orientable vector manifold $M$ and an orientable submanifold $N \subseteq M$ and a unit pseudoscalar of $N$, $I_N(x) \in \GA[M]{x}$, for each $x \in N$, the projected derivative $\partial_N$ is given by \cite{GAMath}
\begin{equation}
 \partial_N = p_{N_x}(\partial) = \sum_i p_{N_x}(e_i) e_i\cdot \partial_M = \sum_i p_{N_x}(e_i)  \frac{\partial }{\partial x_i},
\end{equation}
where $p_{N_x}(a) = I_N(x)^{-1}(I_N(x) \rinner a)$ is the projection of a vector $a$ to the tangent algebra of the manifold $N$ at $x \in N$, and $\{e_i\}$ is a basis of the tangent space $T_x M$.
\end{defn}
Note that the partial derivative operator does not operate on the pseudoscalar $I_N(x)$, and also that the projected derivative can take values in the full tangent algebra of $M$, not just $N$.
In addition, the projections on the basis vectors $e_i$ can be dropped if we let the sum run only over a basis of $T_x N$.
Then one version of the fundamental theorem of calculus can be expressed as \cite{GAMath, FundamentalTheorem}
\begin{thm}[Fundamental theorem of calculus]
	Let $M$ be an oriented $m$-dimensional vector manifold with pseudoscalar $I_M(x)$ and a boundary $\partial M$ that is a vector manifold, $f$ a differentiable function $f: M \to \GA[M]{x}$, and $\partial_M$ the vector derivative on $M$. Then
\begin{equation}
 \int_M \mathrm{d}^m x\ \partial_{M} f(x) = \int_{\partial M} \mathrm{d}^{m-1}x\ f(x),
\end{equation}
 where the pseudoscalar $\mathrm{d}^{m-1}x$ is oriented such that $I_M(x) \norm{\mathrm{d}^{m-1}x} = \mathrm{d}^{m-1}x\ n(x)$, where $n(x)$ is the outward directed unit normal of $\partial M$ at $x$.
\end{thm}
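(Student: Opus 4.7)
The plan is to combine a local-to-global argument with a direct proof on a single small cell. For the local-to-global step, one partitions $M$ into cells $\{V_i\}$ whose tangent pseudoscalars all agree with $I_M$ and writes the cell-wise version of the theorem
\begin{equation}
\int_{V_i} \mathrm{d}^m x\, \partial_M f(x) = \oint_{\partial V_i} \mathrm{d}^{m-1} y\, f(y).
\end{equation}
Summing over $i$, every internal face appears in exactly two cells with opposite induced orientations, so in the directed sense these contributions cancel, and the only surviving boundary pieces reassemble into $\int_{\partial M} \mathrm{d}^{m-1}x\, f(x)$.

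The heart of the matter is therefore the single-cell case, which I would prove by reducing to the classical fundamental theorem on $\mathbb{R}$. Taking $V_i$ to be the image of a small coordinate box under a smooth chart, the vector derivative expands as $\partial_M = \sum_i p_M(e^i)\, \partial/\partial x^i$. Because $\mathrm{d}^m x$ factorizes as the tangent pseudoscalar $I_M(x)$ times the coordinate-box volume element, and $\mathrm{d}^{m-1}y$ factorizes on each face of the box with the correct outward-normal orientation enforced by the hypothesis $I_M \norm{\mathrm{d}^{m-1}y} = \mathrm{d}^{m-1}y\, n(y)$, each scalar coefficient of the cell-wise identity reduces by Fubini and the ordinary fundamental theorem to an exact equality. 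Expanding $f$ in a fixed multivector basis and proceeding componentwise makes this scalar reduction unambiguous.

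The main obstacle, and the source of most of the technical work, is arranging the partition so that cells touching $\partial M$ have one face lying in $\partial M$ with an induced orientation compatible with the outward-normal convention. This is standard for smooth manifolds with smooth boundary but does require using the assumption that $\partial M$ is itself a vector manifold, so that adapted charts exist. Regularity of $f$ (at least $C^1$ up to the boundary) together with continuity of $I_M$ and $n$ then ensure that all the scalar manipulations are legitimate; weaker regularity would demand a limiting argument but leaves the skeleton of the proof intact.
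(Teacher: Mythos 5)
First, a point of comparison: the paper does not prove this theorem at all --- it is quoted from the literature (the references cited alongside the statement), so there is no in-paper argument to measure yours against. Your outline is the standard cell-decomposition proof of the geometric-calculus Stokes theorem: partition $M$, cancel interior faces by orientation of the directed measure, and reduce a single cell to the one-dimensional fundamental theorem. The cancellation step is sound and is exactly the mechanism the paper itself alludes to when it notes that nearby boundary pseudoscalars on opposite sides of an incision satisfy $I(x_1)\approx -I(x_2)$.

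The gap is in the single-cell step, which you describe as following ``by Fubini and the ordinary fundamental theorem'' with the orientation ``enforced by the hypothesis.'' Two things are being skipped. First, the hypothesis $I_M(x)\norm{\mathrm{d}^{m-1}x} = \mathrm{d}^{m-1}x\, n(x)$ is only a convention fixing the sign of the face measure; the actual content you must verify is that when $\mathrm{d}^m x\,\partial_M f = \sum_i \mathrm{d}^m x\, p_M(e^i)\,\partial f/\partial x^i$ is integrated in the $x^i$ direction, the blade $\mathrm{d}^m x\, p_M(e^i)$ reproduces precisely the face pseudoscalar $\mathrm{d}^{m-1}y$ with that orientation, i.e.\ $\mathrm{d}^{m-1}y = I_M\, n^{-1}\norm{\mathrm{d}^{m-1}y}$, carrying opposite signs on the two opposite faces. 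That reciprocal-frame identity is the whole theorem beyond ordinary Stokes, and it is asserted rather than proved. Second, exact per-cell equality holds only for a flat coordinate box with constant $I_M$; on a curved cell the measures $\mathrm{d}^m x$ and $\mathrm{d}^{m-1}y$ do not factor as ``tangent pseudoscalar times coordinate volume element'' without pulling everything back through the outermorphism of the chart differential (and since the measure is multivector-valued and sits to the left of a varying integrand, the $x^i$-antiderivative you need is one of $\mathrm{d}^{m-1}y\, f$, not of $f$). You must either prove the exact identity on the parameter box after that pullback, or weaken the cell-wise claim to equality up to $o(\vol{V_i})$ and close the argument with a limit over refinements. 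Either repair is routine but is where the real work lives.
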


Note that since the measure is pseudoscalar-valued, its position relative to the integrand matters.
Indeed, the most general form of the theorem is concerned with integrals of the form $\int L(\mathrm{d}^m x)$, where $L(\mathrm{d}^m x)$ is a linear function of $\mathrm{d}^m x$ \cite{GAMath}.
However, we will only consider the form with the pseudoscalar measure to the left of the integrand in this paper. 
Let us point out some consequences of the requirement concerning the orientation of the pseudoscalar of $\partial M$:
\begin{itemize}
	\item when we make a very small incision on a manifold, the pseudoscalars of the newly created boundary at two nearby points $x_1$ and $x_2$, on opposite sides of the incision, will be related by $I(x_1) \approx -I(x_2)$, since the corresponding outward normals will be nearly opposite.
	This is what guarantees, at the level of the fundamental theorem, that small incisions in a region where the antiderivative is continuous have a small effect on the value of the integral.
	\item when $M$ is a 1-dimensional manifold, so that its pseudoscalar $\mathrm{d}x$ is a vector, then the unit pseudoscalar of $\partial M$, $\mathrm{d}^0 x$, is the scalar $\pm 1$.
	Indeed, one can think of $\mathrm{d}^0x$ as a signed counting measure.
	As the direction of $\mathrm{d}x$ is continuous over the curve, this sign will specifically be $+1$ at one of the endpoints and $-1$ at the other, in accordance with the fundamental theorem of calculus on \reals{}.
\end{itemize}

Let us then prove a simple lemma:
\begin{lem}\label{lem:inclusion}
	Let $M$ be an oriented $m$-dimensional vector manifold and $f:M \to \GA[M]{x}$ be an  integrable function from the manifold to the algebra. Given a bounded submanifold $E \subset M$ such that $f$ is bounded in $E$, then
	\begin{equation}
	 \norm{\int_{M\setminus E} \dif^mx\ f(x) - \int_{M} \dif^mx\ f(x)} \leq \vol{E} \sup_{x\in E}\norm{f(x)}
	\end{equation}
\end{lem}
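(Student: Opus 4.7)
The plan is to reduce the difference of integrals to an integral over $E$ alone and then bound it using the triangle inequality together with the supremum on $f$.

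First I would use additivity of the directed integral with respect to the integration domain. Since $E \subset M$ and $M$ decomposes (up to a measure-zero set) as $(M \setminus E) \sqcup E$, integrability of $f$ on $M$ together with boundedness of $f$ on the bounded set $E$ (so that $f$ is integrable on $E$ as well) gives
\begin{equation}
\int_M \dif^m x\ f(x) - \int_{M\setminus E} \dif^m x\ f(x) = \int_E \dif^m x\ f(x),
\end{equation}
so the task reduces to showing $\norm{\int_E \dif^m x\ f(x)} \leq \vol{E}\, \sup_{x\in E}\norm{f(x)}$.

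Next I would invoke the definition of L-integrability to expand the integrand in a multivector basis $\{a^I(x)\}$, writing $\dif^m x = \norm{\dif^m x}\, I_M(x)$ where $I_M(x)$ is the unit pseudoscalar, and then $I_M(x) f(x) = \sum_I c^I(x) a^I(x)$ with scalar coefficients $c^I(x)$. The multivector integral is then the multivector assembled from the scalar Riemann integrals $\int_E \norm{\dif^m x}\, c^I(x)$. Because the signature is positive definite, the norm on $\GA[M]{x}$ is an honest norm coming from an inner product, so the standard triangle inequality for vector-valued Riemann integrals applies to Riemann sums and passes to the limit, yielding
\begin{equation}
\norm{\int_E \dif^m x\ f(x)} \leq \int_E \norm{\dif^m x}\, \norm{I_M(x) f(x)}.
\end{equation}

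Finally, since $I_M(x)$ is a unit blade in a Euclidean geometric algebra, left-multiplication by $I_M(x)$ is norm-preserving, so $\norm{I_M(x) f(x)} = \norm{f(x)}$. Bounding this by $\sup_{x\in E}\norm{f(x)}$ and using $\int_E \norm{\dif^m x} = \vol{E}$ finishes the estimate. The one step that requires any real care is the triangle inequality for the multivector-valued integral; I would justify it by reducing to the scalar case component-by-component in the basis $\{a^I\}$ and invoking the equivalence of norms on the finite-dimensional algebra $\GA[M]{x}$, or more cleanly by noting that in positive definite signature $\norm{\cdot}$ is induced by the scalar product $\ast$, so Cauchy--Schwarz applied to Riemann sums gives the bound directly.
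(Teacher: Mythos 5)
Your proposal is correct and follows essentially the same route as the paper's proof: reduce the difference to $\int_E \dif^m x\, f(x)$, apply the triangle inequality to the multivector-valued integral, factor the norm as $\norm{\dif^m x}\norm{f(x)}$ using that the unit pseudoscalar is norm-preserving in Euclidean signature, and bound by $\vol{E}\sup_{x\in E}\norm{f(x)}$. The paper states this as a single chain of (in)equalities; you merely supply the justifications (additivity, component-wise reduction for the triangle inequality, multiplicativity of the norm over a unit blade) that the paper leaves implicit.
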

\begin{proof}
	Direct calculation using the triangle inequality:
	\begin{equation}
	\begin{split}
	&\norm{\int_{M\setminus E} \dif^mx\ f(x) - \int_{M} \dif^mx\ f(x)} = \norm{\int_{E} \dif^mx\ f(x)} \leq \int_{E} \norm{\dif^mx\ f(x)}\\
	 &=  \int_{E} \norm{\dif^mx}\norm{ f(x)} \leq \int_{E} \norm{\dif^mx}\sup_{x \in E}\norm{f(x)} = \vol{E} \sup_{x\in E}\norm{f(x)}.
	 \end{split}
	\end{equation}
	Note that the suprema exists and is finite since $E$ is bounded and $f$ is bounded on $E$.
\end{proof}

\comment{
\begin{lem}\label{lem:incision}
	Let $M$ be an $m$-dimensional manifold and $E\subset M$ be as in Lemma~\ref{lem:inclusion}, $f: M \to \GA[M]{x}$ a bounded function with the antiderivative $F: M \setminus E \to \GA[M]{x}$ on $M \setminus E$, $\partial_p F(x) = f(x)$ s.t. the conditions for the fundamental theorem of calculus are satisfied. Then
	\begin{equation}
	\norm{\int_{M} \dif^mx\ f(x) - \int_{\partial (M \setminus E)} \dif^{m-1}x\ F(x)} \leq \epsilon,
	\end{equation}
	where $\epsilon = \delta \sup_{x\in E}\norm{f(x)}$.
\end{lem}
\begin{proof}
	Using the fundamental theorem and Lemma~\ref{lem:inclusion}:
	\begin{eqnarray}
	&&\norm{\int_{M} \dif^mx\ f(x) - \int_{\partial (M \setminus E)} \dif^{m-1}x\ F(x)}  \nonumber\\ 
    &=&\norm{\int_{M} \dif^mx\ f(x) - \int_{M \setminus E} \dif^{m}x\ f(x)}
	\leq  \delta \sup_{x\in E}\norm{f(x)}.
	\end{eqnarray}
\end{proof}
}
The point of Lemma~\ref{lem:inclusion} is that it allows us to cut out a part of the manifold in order to guarantee that it has a boundary, and still keep control of the error we are making.
Also, we will find out that usually functions on manifolds without boundary do not have single valued antiderivatives, and the lemma allows us to exclude a branch cut, since the existence of the antiderivative is only necessary on the part of the manifold that is not cut.

\begin{defn}
	Let $M$ be a vector manifold and $f: M \to \GA[M]{x}$ be a function on the manifold.
	If $f$ has an antiderivative $F$ on $M$, we write $F =: \partial^{-1}_M f$.
	If $\partial^{-1}_M f$ again has an antiderivative on $N \subseteq M$, we denote that by $\partial^{-2}_{M N}f$, and in general we write $\partial^{-n}_{M_1 M_2 \ldots M_n}f$ for the $n$th antiderivative of $f$ on the manifold $M_n$, if it exists, with $M_1 \subseteq M_2 \subseteq \ldots \subseteq M_n$.
	\label{def:antiderivative}
\end{defn}
Note that due to the projection operator in the derivative on a manifold, the antiderivative in general depends on the manifold in which it is defined. In other words an antiderivative on a submanifold is not necessarily just the restriction of some antiderivative on the full manifold. Also, in the above definition the antiderivative is ambiguous, so when using the notation we have to either define how to choose a specific antiderivative, or show that our results don't depend on the choice.

Now we get to the main result:
\begin{thm}\label{thm:main}
	Let $M$ be an $m$-dimensional orientable vector manifold, and $f: M \to \GA[M]{x}$ an integrable function. If there exists a sequence of orientable manifolds $N_0 \subset N_1 \subset \ldots \subset N_{m} = M$ and a sequence of bounded sets $E_i$ such that
	\begin{itemize}
		\item if $\partial N_{i+1} \neq \emptyset$, $N_i = \partial N_{i+1}$, otherwise $N_i = \partial(N_{i+1}\setminus E_{i+1})$, where $E_{i+1}$ is a bounded set such that the boundary $\partial (N_{i+1} \setminus E_{i+1})$ is a non-empty vector manifold, and $\partial^{-m + i + 1}_{N_{i+1}\ldots N_m} f$ is integrable and bounded on $E_{i+1}$.
		
		\item there exists an antiderivative $\partial^{-m+i}_{N_{i} \ldots N_m} f$ on $N_i$, which is bounded.
		
		\item $N_0$ is a finite set
	\end{itemize}
	then the integral of $f$ over $M$ can be computed by evaluating the $m$th antiderivative on $N_0$:
		 \begin{equation}
				\norm{\int_M \dif^mx\ f(x) - \sum_{x_i\in N_0} \partial^{-m}_{N_0 \ldots N_m}s_i f(x_i)} \leq \epsilon,
			  \end{equation}
			  where $\epsilon = \max_i \sup_{x \in E_i}\norm{\partial^{-m+i}_{N_{i} \ldots N_m}f(x)} \sum_i \vol{E_i}$, and the signs $s_i \in \{-1, 1\}$ are determined by fulfilling the requirement on the orientation of the boundary in the fundamental theorem at each step.
\end{thm}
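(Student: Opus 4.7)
I would peel off one dimension at a time by alternating the fundamental theorem of calculus with Lemma \ref{lem:inclusion}, using the $N_i$ and $E_i$ furnished by the hypothesis. Concretely, for $k = 0, 1, \ldots, m$ define
\[ I_k := \int_{N_{m-k}} \dif^{m-k}x\ \partial^{-k}_{N_{m-k}\ldots N_m} f(x), \]
with the convention $I_0 = \int_M \dif^m x\,f(x)$ (no antiderivative taken) and $I_m = \sum_{x_i\in N_0} s_i\,\partial^{-m}_{N_0\ldots N_m} f(x_i)$, where the signed counting measure interpretation of $\dif^0 x$ discussed after the statement of the fundamental theorem produces the signs $s_i$. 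The goal is then to show $\norm{I_0 - I_m}\leq \epsilon$.

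\textbf{Inductive step.} At the $k$th step (passing from $I_{k-1}$ to $I_k$) there are two sub-cases. If $\partial N_{m-k+1} \neq \emptyset$ and equals $N_{m-k}$, no incision is made and the fundamental theorem applied to the integrand of $I_{k-1}$---which equals $\partial_{N_{m-k+1}}$ acting on the next antiderivative by Definition \ref{def:antiderivative}---yields $I_{k-1} = I_k$ exactly, contributing nothing to the error. If instead an incision $E_{m-k+1}$ is supplied, I first invoke Lemma \ref{lem:inclusion} with integrand $\partial^{-k+1}_{N_{m-k+1}\ldots N_m} f$ (bounded and integrable on $E_{m-k+1}$ by hypothesis) to pass from integrating over $N_{m-k+1}$ to integrating over $N_{m-k+1}\setminus E_{m-k+1}$, picking up an error of at most $\vol{E_{m-k+1}}\sup_{x\in E_{m-k+1}}\norm{\partial^{-k+1}_{N_{m-k+1}\ldots N_m} f(x)}$; then the fundamental theorem on this cut manifold, whose boundary is $N_{m-k}$ by hypothesis, reproduces $I_k$ exactly.

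\textbf{Error accumulation.} Summing the errors over all $m$ steps by the triangle inequality and reindexing via $j = m-k+1$,
\[ \norm{I_0 - I_m} \leq \sum_{j=1}^m \vol{E_j}\sup_{x\in E_j}\norm{\partial^{-m+j}_{N_j\ldots N_m} f(x)}, \]
where terms corresponding to steps taken without an incision simply contribute zero (formally take $\vol{E_j}=0$ there). Factoring out the maximum of the suprema yields the claimed $\epsilon$.

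\textbf{Main obstacle.} Once the two tools (the fundamental theorem and Lemma \ref{lem:inclusion}) are in hand, the content of the proof is essentially bookkeeping. The care required is in (i) keeping the indexing of the nested $N_i$, incisions $E_i$, and iterated antiderivatives consistent; (ii) checking that the orientation convention for the pseudoscalar of the boundary is faithfully propagated through each step, which is what cleanly produces the signs $s_i$ at the terminal $0$-dimensional integration; and (iii) verifying that each cut manifold $N_{i+1}\setminus E_{i+1}$ meets the hypotheses of the fundamental theorem (non-empty vector-manifold boundary and a continuous antiderivative on the closed integration region), all of which is directly guaranteed by the assumptions of the theorem. No substantive analytic obstacle is anticipated.
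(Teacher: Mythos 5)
Your proposal is correct and follows essentially the same route as the paper's own proof: a telescoping argument that alternates the fundamental theorem with Lemma \ref{lem:inclusion} at each dimension, accumulates the incision errors $\vol{E_i}\sup_{x\in E_i}\norm{\partial^{-m+i}_{N_i\ldots N_m}f(x)}$ by the triangle inequality, and bounds the suprema by their maximum. The only (cosmetic) difference is that the paper peels from the $N_0$ end upward while you peel from $M$ downward, and the paper additionally remarks that the result is independent of the choice of antiderivative since two antiderivatives differ by a monogenic function.
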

Before proving the theorem, we make a few remarks.
We basically forced the theorem to be true by sticking all the difficult parts into the assumptions.
Note however that the local existence of an antiderivative is guaranteed for a differentiable function \cite{macdonald2012vector, macdonaldGC, FundamentalTheorem}, and also that the set $N_0$ is automatically discrete since it is the boundary of a 1-dimensional manifold, and with very mild assumptions on $M$ the $E_i$ can be chosen such that $N_0$ is a finite set.
In essence these assumptions allows us to prove the theorem without getting mixed up in topological complications, and for most practical applications the natural choice of the sets $N_i$ will anyway fulfill these assumptions, which is why we are not interested in sharpening the theorem at this point.\footnote{
	Since in many applications there may be a branch cut that goes to infinity, relaxing the assumption about $E_i$'s being bounded would be beneficial, allowing to compute also such integrals when they are finite. 
	This would entail finding a sufficient set of assumptions to guarantee that $\int_{E_i} \partial^{-m + i}_{N_i\ldots N_m} f$ goes to zero as the set $E_i$ shrinks to zero. 
	In specific cases this should not be difficult.
	}

\begin{proof}[Proof of theorem~\ref{thm:main}]
	First note that since the integral of a bounded function over a bounded set is finite, each of the suprema in the expression for $\epsilon$ exist.
	The only part left to prove is the inequality.
	Using the fundamental theorem, Lemma~\ref{lem:inclusion} and the triangle inequality, we first compute
	\begin{equation}
	\begin{split}
		&\norm{\int_M \dif^mx\ f(x) - \sum_{x_i\in N_0} \partial^{-m}_{N_0 \ldots N_m}s_i f(x_i)}\\
		 =& 	\norm{\int_M \dif^mx\ f(x) - \int_{N_1 \setminus E_1} \dif x \partial^{-m + 1}_{N_1 \ldots N_m} f(x)}\\
		 =& 	\left\| \int_M \dif^mx\ f(x) - \int_{N_1\setminus E_1} \dif x \partial^{-m + 1}_{N_1 \ldots N_m} f(x) \right.\\
		  &+\left. \int_{N_1} \dif x \partial^{-m + 1}_{N_1 \ldots N_m} f(x) -
		 	 \int_{N_1} \dif x \partial^{-m + 1}_{N_1 \ldots N_m} f(x)\right\|\\
		 	 \leq& \norm{\int_M \dif^mx\ f(x) - \int_{N_1} \dif x \partial^{-m + 1}_{N_1 \ldots N_m} f(x)}\\
		 	 &+ \left\| \int_{N_1\setminus E_1} \dif x \partial^{-m + 1}_{N_1 \ldots N_m} f(x) -
		 	 \int_{N_1} \dif x \partial^{-m + 1}_{N_1 \ldots N_m} f(x)\right\|\\
		 	 \leq& \norm{\int_M \dif^mx\ f(x) - \int_{N_1} \dif x \partial^{-m + 1}_{N_1 \ldots N_m} f(x)}\\
		 	 &+ \vol{E_1} \sup_{x\in E_1} \norm{\partial^{-m+1}_{N_1\ldots N_m} f(x)}.
	\end{split}		 	 
	\end{equation}
	
	Note that since two antiderivatives differ at most by a monogenic function $\psi$ for which $\partial_{N_0} \psi(x) = 0$ \cite{GAMath}, this result is independent of the choice of antiderivative, resolving the caveat mentioned in definition \ref{def:antiderivative}.
	Also, the signs $s_i$ must indeed follow the orientation requirement of the fundamental theorem to allow representing the sum as an integral.
	
	We can then continue using similar steps, each of which produces an approximation error $\vol{E_i}\sup_{x\in E_i}\norm{\partial^{-m+i}_{N_i\ldots N_m}}$, until finally at the $m$'th step, we get
	\begin{equation}
	\norm{\int_M \dif^mx\ f(x) - \int_{N_m} \dif^mx\ \partial^{0}_{N_m}f(x)} + \sum_i  \vol{E_i} \sup_{x\in E_i} \norm{\partial^{-m+i}_{N_i\ldots N_m} f(x)},
	\end{equation}
	where $N_m = M$ and $\partial^0_{M} f(x)$ is the function itself, and so the integral term is zero.
	Approximating the suprema by their maximum concludes the proof.
\end{proof}

There is a simple corollary;
\begin{cor}
	Let $M$ be a vector manifold without a boundary, and $f: M \to \GA[M]{x}$ be a bounded integrable function such that its integral over $M$ is non-zero.
	Then any antiderivative $\partial^{-1}_M f$ of $f$ must have a branch cut discontinuity which divides the manifold into at least two parts with non-zero volumes.
\end{cor}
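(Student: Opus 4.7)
I would argue by contraposition: assume $F = \partial^{-1}_M f$ is an antiderivative whose set of discontinuities $B \subset M$ does not separate $M$ into two regions of positive volume, and (implicit in the corollary's reading) that $F$ is bounded on $M \setminus B$. The goal is to deduce $\int_M \dif^m x\, f(x) = 0$, contradicting the non-vanishing hypothesis.

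The plan is to execute one step of the construction behind Theorem~\ref{thm:main}, adapted to the boundaryless setting. For each $\epsilon > 0$, I would choose an open incision $E_\epsilon \supset B$ with $\vol{E_\epsilon} < \epsilon$ such that $M \setminus E_\epsilon$ is an orientable vector manifold with smooth boundary on which $F$ is continuous, and moreover such that $\vol{\partial(M \setminus E_\epsilon)} \to 0$ as $\epsilon \to 0$. This last property is the geometric content of the hypothesis on $B$: if $B$ is empty a shrinking coordinate ball works, and if $B$ has codimension at least $2$ a shrinking tubular neighborhood works; in either case both the bulk volume and the boundary area vanish with $\epsilon$. By contrast, a genuine codimension-1 cut that splits $M$ into two positive-volume pieces would leave the boundary of any thin incision with a slab of $(m-1)$-dimensional area bounded below by the area of the cut itself, ruling this construction out. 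The fundamental theorem then gives
\begin{equation}
\int_{M \setminus E_\epsilon} \dif^m x\, f(x) = \int_{\partial(M \setminus E_\epsilon)} \dif^{m-1} x\, F(x),
\end{equation}
and the right-hand side is bounded, by the same triangle-inequality calculation as Lemma~\ref{lem:inclusion} applied one dimension lower, by $\vol{\partial(M \setminus E_\epsilon)} \sup_{x \in \partial(M \setminus E_\epsilon)} \norm{F(x)}$, which tends to $0$. Meanwhile, Lemma~\ref{lem:inclusion} applied to $M$ and the bounded function $f$ gives $\int_{M \setminus E_\epsilon} \dif^m x\, f \to \int_M \dif^m x\, f$. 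Sending $\epsilon \to 0$ therefore forces the integral to vanish, which is the desired contradiction.

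The step I expect to be the main obstacle is constructing this family of incisions rigorously: formalising the equivalence between ``$B$ does not divide $M$ into two positive-volume pieces'' and ``there exist incisions around $B$ with both volume and boundary volume tending to zero'' requires some mild regularity of the discontinuity set, which the corollary statement does not quantify. The underlying intuition is however already established in the paper: the sign convention for the pseudoscalar measure makes the two sides of a non-jumping boundary cancel, so only a genuine codimension-1 jump of $F$ across a cut of positive $(m-1)$-dimensional measure can survive in the limit, and such a cut necessarily partitions a neighbourhood of itself into two substantial regions of $M$.
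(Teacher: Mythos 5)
Your argument is correct and rests on the same mechanism as the paper's own proof: excise a shrinking set around the problematic region, apply the fundamental theorem once on the complement, bound the resulting boundary integral by (boundary volume)\,$\times$\,$\sup\norm{F}$, and use Lemma~\ref{lem:inclusion} to control the bulk error, so that both contributions vanish in the limit and force $\int_M \dif^m x\, f = 0$. The one substantive difference is that the paper's proof only negates the conclusion to ``an antiderivative exists on the whole of $M$'' and removes a single small incision, whereas your contrapositive also covers the intermediate case in which $F$ does have a discontinuity set $B$ that merely fails to separate $M$ into two positive-volume pieces (e.g.\ $B$ of codimension at least two, handled by a shrinking tubular neighbourhood). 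That extra case is actually needed to establish the corollary as literally stated, so your version is the more faithful one; the price is the regularity hypothesis on $B$ that you correctly flag, which is unavoidable given that the corollary does not quantify what a ``branch cut discontinuity'' is, and which is consistent with the paper's informal level of rigour (the paper likewise asserts without detail that the antiderivative of a bounded function is bounded, a point you sensibly promote to an explicit assumption).
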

\begin{proof}
	Assume the opposite, that is, that there exists an antiderivative of $f$ on the whole of $M$.
	Then we can make a cut according to theorem~\ref{thm:main}, and let its volume shrink to zero.
	Since the antiderivative of a bounded function is bounded (which can be seen, for example, by considering the scalar components and applying the usual theorems of integration), this means that the result of the integration is zero.
	This is a contradiction.
\end{proof}
In particular, this means that the norm of the volume form on a manifold without boundary cannot have an antiderivative everywhere.
Also, since every function on a manifold is an antiderivative of its own derivative, this corollary may have some links to the hairy ball theorem.

Note also that even though the method is phrased in terms of the directed integral, it is immediately applicable to the usual integral with a scalar measure. We simply write $\norm{\dif^m x} f(x) = \dif^m x I(x) f(x)$, where $I(x)$ is the unit pseudoscalar of the manifold at $x$.

In order to do a specific calculation, we find the necessary antiderivatives and sets to cut out by any means we like, and then using theorem~\ref{thm:main}, we can be assured that as we let the volume of the incisions $E_i$ go to zero we get the exact value of the integral.
Note that since the errors are additive, the order of the limits for the various sets does not matter (unless their construction dictates a specific order).
Of course, we have only proven that if this construction can be made, then we can do the coordinate free integral.
Let us next present some examples to show that such constructions indeed do exist.

\section{Examples}

Next we compute examples of applying this method of integration.
Since these quite trivial examples already show many of the features we expect to encounter in more generic cases, we work them out in detail.
The algebra and rules for computing the derivatives needed in this section are contained, for example, in \cite{GAMath, doran2007geometric, HitzerCalculus, Chisolm:2012aa}.

\subsection{The area of a disk}

As the first example of application of the method, we calculate the area of a disk of radius $r$ in $\mathbb{R}^2$.
The integral we intend to compute is
\begin{equation}
A_{B_r} = \int_{B_r} \dif\, ^2 x,
\end{equation}
where $B_r = {\{x\in \reals{2}:\ \norm{x} < r\}}$. Note that since the directed volume element $\dif\, ^2 x$ is a bivector, we expect to get the result as a bivector.
We define the corresponding unit bivector $I_2 = \frac{\dif\, ^2 x}{\norm{\dif\, ^2 x}}$.
The first step is to find the antiderivative of the constant function $1$.
This is by inspection $\frac{1}{2}x$, since in general the derivative $\partial_M x$ is $m$, where $m$ is the dimension of the manifold \cite{GAMath, HitzerCalculus}.
Therefore, the integral is reduced to
\begin{equation}
\label{eq:circleintegral}
\frac{1}{2} \int_{S^1} \dif x\, x,
\end{equation}
where $\dif x$ is the vector-valued measure on the circle.
Now the projection of a vector $a$ to $S^1$ at point $x$ is $p_{S^1}(a) = x^{-1}(x \wedge a)$.
Intuitively, we see that the integral to calculate measures distance along the circle, i.e. the angle.
So does the complex logarithm, and so we are led to the try the function $\log(x x_0)$, where $x_0$ is an arbitrary constant vector in \GA{\reals{2}}, and since $x x_0$ is in the even subalgebra of \GA{\reals{2}} which is isomorphic to the complex numbers with the unit pseudoscalar $\frac{x\wedge x_0}{\norm{x \wedge x_0}} = -I_2$ acting as the imaginary unit, the logarithm may be defined analogously to the complex logarithm.
The negative sign appears when comparing the orientation of $\dif\, ^2 x$ to that of $x\wedge x_0$ via the requirement $\dif x \,\hat{x} = \norm{\dif x} \dif\, ^2 x$, coming from the fundamental theorem, where $\hat{x}$ is the unit normal at $x$, and choosing the positive sense of rotation to be counterclockwise.

In order to compute the projected derivative, we observe that in general $\partial_M f(x) = \dot{\partial}_M (\dot{a}\linner \partial_x) f(x)$, where $\partial_x$ is the full vector derivative without the projection, and the overdot denotes that the derivative $\dot{\partial}_M$ acts only on $a$.
Then, using the chain rule and the fact that the derivative $(x x_0)\ast \partial_z$ reduces to the directed derivative in the direction $x x_0$ \cite{HitzerCalculus}, which further reduces to the complex derivative times $x x_0$ since the direction commutes with the argument, we can further calculate 
\begin{equation}
\begin{split}
\partial_{S^1} \log(x x_0) &= \dot{\partial}_{S^1} (\dot{x}x_0)\ast \partial_z \log{z}|_{z = x x_0} = \dot{\partial}_{S^1} (\dot{x} x_0) z^{-1}|_{z = x x_0}\\
&= x_0 \frac{x_0 x}{\norm{x x_0}^2} = x^{-1},
\end{split}
\end{equation}
where the overdot limits the scope of the derivative to the dotted objects, as in \cite{GAMath}.
We observe that $\partial_{S^1} x^2 = 0$, as expected, and therefore deduce immediately that $\partial_{S^1} \frac{1}{2}x^2 \log(x x_0) = \frac{1}{2}x$, which is our antiderivative.
The boundary of $S^1$ is empty, but according to our method we cut a small segment, for example the part where $\frac{\abs{x\cdot x_0}}{\norm{x x_0}} > \cos \epsilon$ which is the part at an angle less than $\epsilon$ to $x_0$.
The complex logarithm function is bounded away from zero, and our incision is bounded, so the assumptions of theorem~\ref{thm:main} are satisfied and we calculate
\begin{equation}
\label{eq:logresult}
\int_{S^1} \dif x \frac{1}{2} x = \sum_{x_i \in \partial(S^1 \setminus \{x :\ \norm{x - x_0} < \epsilon\})} s_i \frac{1}{2} x_i^2 \log(x_i x_0).
\end{equation}
Let us choose the branch of the complex logarithm such that $\log(xx_0)|_{x = x_0} = \log\norm{x x_0} - 0 I_2$.
We observe that since the antiderivative must be continuous inside the set where we made the cut, we must then allow the logarithm to approach the value $\log \norm{x x_0} - 2 \pi I_2$ on the other side of the cut, where the negative sign comes from the sign difference between $I_2$ and $x\wedge x_0$.
Note that this puts the branch cut on the positive real axis on the complex plane spanned by $1$ and $I_2$.
The signs $s_i$ are fixed by the fundamental theorem: at the beginning of the interval, $\dif x$ points to the outside of the region, so the "pseudoscalar" must be $s_0 = 1$ to keep the outward unit normal in the same direction.
At the end of the interval $\dif x$ points in the inward direction, and we get $s_1 = -1$.
Therefore the sum results in $A_{B_r} = \pi r^2 I_2$, as expected.

\subsection{The volume of a cylinder}

Let us do an example in three dimensions.
Let $M$ be the cylinder defined by the equations
\begin{align}
 I_3 \wedge x &= 0\label{eq:cylind0}\\
(\omega \rinner x)^2 &\leq r^2 \label{eq:cylind1}\\
 0 \leq (\omega^{-1} (\omega \wedge x)) \rinner (\omega I_3)&\leq h\label{eq:cylind2},
\end{align}
where $\omega$ is a unit bivector determining the plane orthogonal to the axis of the cylinder, $r$ and $h$ are positive real numbers, and $I_3$ is the pseudoscalar of the 3D space in which the cylinder lies.
Eq.~\eqref{eq:cylind0} guarantees that the cylinder is in the space determined by $I_3$ and effectively reduces the problem to three dimensions, whereas Eq.~\eqref{eq:cylind1} sets the radius of the cylinder.
Eq.~\eqref{eq:cylind2} sets the height of the cylinder.

\begin{figure}[h]\centering
\fcapside[\FBwidth]
		{\caption{The cylinder to be integrated. 
				The red translucent part is the chamfer which we cut away before the first integration.
				Note that while its surface does not have a pseudoscalar defined everywhere, the volume itself does.
				The bottom and top of the cylinder are in the plane defined by the bivector $\omega$.}\label{fig:cylinder}}
{\includegraphics[trim=40 60 40 20, clip, width=0.28\textwidth]{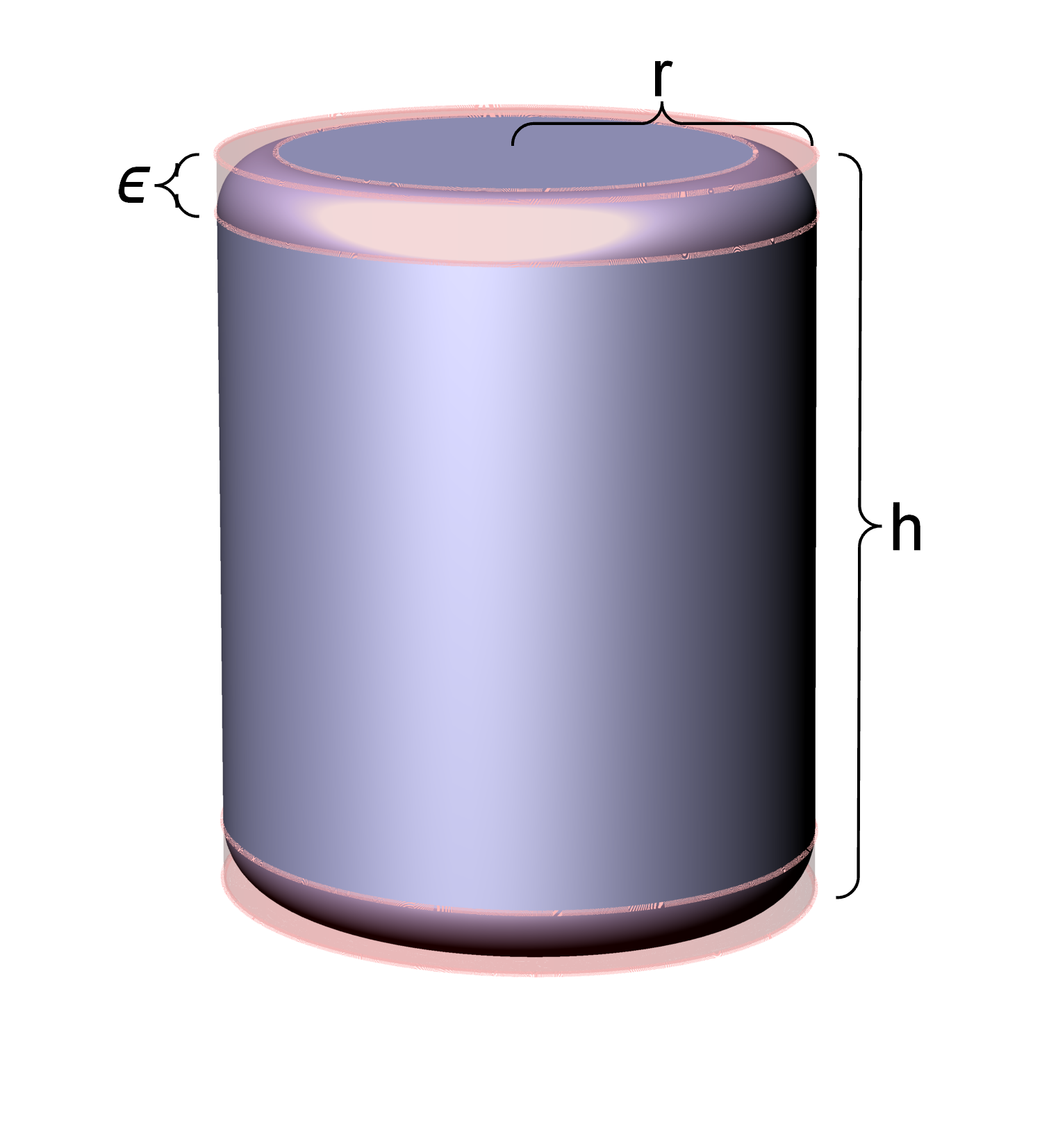}}
\end{figure}

In this case, the cylinder has a sharp edge, which would, after the first integration, contradict the assumption that the pseudoscalar of the surface exists everywhere.
Let us therefore this time use lemma~\ref{lem:inclusion} to cut a circular chamfer of radius $\epsilon$ to the edges, as depicted in \figref{fig:cylinder}, such that the remaining manifold is smooth.
The chamfer has a volume proportional to $\epsilon^2$.
Note that in all three parts the 3D pseudoscalar is well defined everywhere.

The first integral is again trivial, resulting in $\frac{1}{3} x$, since the cylinder is lying in a flat three-dimensional space, and we are integrating the constant function.
After this, we again use lemma~\ref{lem:inclusion} to ignore the surface of the chamfer, and only concern ourselves with the flat parts of the surface integral.
For the surface integral along the sides, we first observe that, with $f(x) = (\omega \rinner x)^2$ being the function whose constant value surface $f(x) = r^2$ defines the side of the cylinder, and given a point $x$ on the side, the projection of a vector $a$ to the tangent space is given by
\begin{equation}
 p_\mathrm{side}(a) = (\partial f(x) I_3)^{-1} (\partial f(x)I_3)\rinner a = r_{\omega}(a) + p_{x\linner \omega}(a),\label{eq:tangentsplit}
\end{equation}
where
\begin{equation}
 r_{\omega}(a) = \omega^{-1}(\omega \wedge x)\quad \text{and\ }\\
 p_{x\linner \omega}(a) = (x\linner \omega)^{-1}(x\linner \omega) \rinner a
\end{equation}
are the rejection from, i.e. part orthogonal to, $\omega$, and the projection to the direction of the vector $x \linner \omega$, which lies in the plane of $\omega$ and orthogonal to $x$, respectively.

We find the antiderivative $\partial^{-1}_{\textrm{side}} x =  x\, r_{\omega}(x)$.
This can be verified by taking the derivative and using the facts that $x = p_{\omega}(x) + r_{\omega}(x)$, where $p_{\omega}(x)$ is the projection to $\omega$, the fact that since the projection to the tangent space splits as in Eq.~\eqref{eq:tangentsplit} then also the derivatives split in the same way, and finally that $\partial_p x\wedge a = d_p a - p(a)$, where $\partial_p$ is the derivative projected with the projection $p$ and $d_p$ is the dimension of the subspace projected to.

In order to do the final integral for the side along the boundary left by the chamfer cut, which is a circle in the plane $\omega$, and at height $h - \epsilon$ above the origin, we note that $r_{\omega}(x)$ is simply the constant vector height along the circle and therefore also constant with respect to the derivative on that circle, so we are left with integrating $x = p_{\omega}(x) + r_{\omega}(x)$ on the circle.
Now $p_{\omega}(x)$ is on the plane of the circle, and therefore we know from the disk example that the integral of the $p_{\omega}(x)$ -part will be $2\pi\norm{p_{\omega}(x)}^2 I_2$ with $I_2 = \omega$ and $\norm{p_\omega(x)}^2 = r^2$.
Integrating the constant produces $x$ times the constant, and since $x$ is regular on the whole circle, the subtraction will produce 0.
The other boundary component is the circle along the bottom, where the calculation is identical expect that now $\norm{r_{\omega}(x)} = \epsilon$, and the sign is opposite since the orientation of the boundary is opposite.
The integral along the sides then total $\frac{2\pi}{3} r^2 (h-2 \epsilon) I_3$, where the pseudoscalar $I_3$ comes from the product of the bivector $\omega$ and vector $r_\omega(x)$.

The other boundary components are the caps on the top and the bottom.
The projection to the tangent plane is simply $p_\omega$, and therefore splitting again $x = p_{\omega}(x) + r_{\omega}(x)$, we find the antiderivative
\begin{equation}
 \partial^{-1}_{\omega} x = \frac{1}{2} p_{\omega}(x)^2 + \frac{1}{2} p_{\omega}(x) r_{\omega}(x).
\end{equation}
The half on the second term comes from the fact that the projection is two-dimensional.
We have to integrate this on the boundary of the cap, which is the circle at radius $r - \epsilon$ (since we cut the chamfer off the edge).
The first term again integrates to zero, since on the circle $(p_\omega(x))^2$ is a constant, whereas the second term again reduces to the case of the disk, and therefore produces $\frac{\pi}{3} (r-\epsilon)^2 \omega r_{\omega}(x)$, where we have inserted the $1/3$ from the first integral.
The cap on the bottom is again the same, with this time $\norm{r_{\omega}(x)} = \epsilon$, and so putting the caps and the side together and letting $\epsilon \rightarrow 0$ we get the final result
\begin{equation}
 \int_{\textrm{cylinder}} \dif\,^3\,x = \pi r^2 h I_3
\end{equation}
as expected.

\section{Toward a more systematic method}

The above examples are calculated rather ad hoc, in the sense that the antiderivatives are guessed and then checked by derivation.
The path toward a more systematic method for calculating coordinate-free integrals is however clear: first, a systematic table of antiderivatives needs to be built by reading tables of vector derivatives in inverse.
To provide an example, table \ref{tab:antiderivatives} lists some such antiderivatives.
\begin{table}
	\begin{tabular}{ >{$\displaystyle}c <{$}| >{\centering$\displaystyle}m{70mm} <{$} m{30mm}}
		f(x) & \partial_{\reals{d}}^{-1} f(x) & Remarks\\
		\hline
		1 & \tfrac{1}{d} x + C(x) &\\[18pt]
		x & \tfrac{1}{2} x^2 + C(x) & \\[18pt]
		\hat{x} & \abs{x} + C(x) &\\[18pt]
		ax & \frac{2 x(x\linner a) - \frac{1}{2} d x^2 a}{d+2} + C(x) & $a$ is a constant vector\\[18pt]
		f(\norm{x}) & \frac{x}{\norm{x}^{d}} \int^{\norm{x}} s^{d-1} f(s) \dif s + C(x)& $f: \reals{} \to \reals{}$ and $f$ is integrable\\[20pt]
	\end{tabular}
	\caption{Some antiderivatives in flat $d$-dimensional space. Here $C(x)$ is the analogue of the constant of integration, which now becomes an arbitrary monogenic function, i.e. $\partial C(x) = 0$.}
	\label{tab:antiderivatives}
\end{table}
Once such a table exists in flat space, there is no need to generate a new one for each manifold.
Rather, given a function on a manifold with a known embedding in flat space, we can simply use (the inverse of) the embedding function to map the function to a flat subspace of the embedding manifold.
The change of variables induces a mapping of the pseudoscalar via its differential outermorphism \cite{GAMath}, from which we extract the pseudoscalar of the flat space.
The product of the part extracted from the pseudoscalar and the function mapped to the flat subspace can then be integrated using the table of antiderivatives in flat space.

For example, when integrating a function on the circle, the mapping
\begin{equation}
 y(x) = \log(x x_0)x_0
\end{equation}
takes $x$ to a vector $y$, with a constant length in the direction of $x_0$, and changes in $x$ along the circle affect $y$ only in a direction orthogonal to $x_0$.
In other words, the circle is mapped to a line.
The pseudoscalar $\mathrm{d}x$ on the circle is derived as
\begin{equation}
\label{eq:circlecov}
\mathrm{d}x = \underline{y}^{-1}(\mathrm{d}y) = \mathrm{d}y\ x_0^{-1}e^{yx_0^{-1}}x_0^{-1},
\end{equation}
where $\underline{y}^{-1}$ is the inverse of the differential of $y(x)$.
This gives a method for reducing an integral on the circle to an integral on a straight line.
Inserting \eqref{eq:circlecov} to \eqref{eq:circleintegral} immediately yields \eqref{eq:logresult}.

The above method is of course closely analogous to an ordinary change of variables in coordinate-based methods of integration, with the Jacobian appearing in the mapping of the pseudoscalar.
It also requires an explicit expression for an embedding in a flat space.
At least for the case of a manifold defined by $m(x) = 0$, where $m(x)$ is a scalar-valued function, we can sketch an alternative method.
The pseudoscalar of the manifold at point $x$ is $(\partial m(x))I$, where $I$ is the unit pseudoscalar of the embedding space.
We then look for a mapping $y(x)$ such that
\begin{equation}
\label{eq:pseudoscalardiff}
 \underline{y}((\partial m(x))I) = I_0,
\end{equation}
where $I_0$ is the constant pseudoscalar of a flat subspace of the embedding space.
Note that $\underline{y}$ has to be linear in its argument, but can depend on $x$ in a complicated way.
Then \eqref{eq:pseudoscalardiff} is a differential equation for the mapping which, once solved for a given manifold, reduces integrals of functions on the manifold to integrals on a flat space.

Finally, as already integrals of functions of a real variable can rarely be evaluated analytically in terms of a finite set of elementary functions, we cannot expect to do any better in this generalized case.
Therefore the ultimate goal must be a coordinate-free approximation theory, which would allow evaluating integrals of sufficiently smooth functions in a similar way as an integral for a real analytic function can always be evaluated in terms of a Taylor series.
We however leave that problem for a later work, although with some speculation about possible properties of such approximations in the next section.

\section{Conclusions and outlook}

We have presented a method for computing integrals in $m$ dimensions without using coordinates.
Naturally, the level of freedom from using coordinates depends on how the manifold and the integrand are defined.
One purely coordinate free way is to define the manifold by solutions of $m(x) = 0$, where $m(x)$ is a function of the vector $x$ constructed from geometric products of $x$ with itself and some (possibly infinite) set of constant multivectors $A_i$, where the geometric relations between $A_i$ and $x$ are known in sufficient detail to allow carrying out all the necessary algebraic manipulations without coordinates.
Both of our examples are of this form.

In the examples, we integrate the constant function on two manifolds in order to compute their volumes. 
The actual computations in these examples are not complicated when compared to the same computation in coordinates, which for a fair comparison needs to take into account the derivation of the Jacobian in polar or cylindrical coordinates.
Further development of our method will indeed require building a comprehensive toolbox of systematic methods for finding antiderivatives of multivector valued functions of vector variables on vector manifolds.
While this program is still in its infancy, we have found some rules with some level of generality: for example, as shown in table \ref{tab:antiderivatives}, an antiderivative of $f(\norm{x})$ in $d$-dimensions is simply $\frac{x}{\norm{x}^{d}} \int \dif s\ s^{d-1} f(s)$, where $f(s)$ is a scalar valued function of a scalar, and so the remaining integral is an ordinary scalar integral. This rule is of course equivalent to integrating in a spherical coordinate system, expressed in a coordinate-free way.

As an interesting note, in some examples which we have worked out but not reported here, such as the volume of $B^3$, it is not necessary to actually find an antiderivative, but rather one can find a function whose derivative differs from the desired one by a function which can be seen to integrate to zero.
We can then use such a function instead of the antiderivative to still get the correct result.
However, we will not comment on this further before we understand the phenomenon in more detail. It may turn out to be only a fortunate coincidence occurring in a limited number of cases, rather than something that can be included in a general toolbox.

Let us indulge in some speculation concerning possible applications of the method to more than just evaluating integrals in the few special cases where antiderivatives can be explicitly found.
Consider a function $f(x)$ on a manifold $M$ defined by $m(x) = 0$ for some multivector valued function $m(x)$ and with $x$ in $\reals{d}$. 
In order to calculate the integral of $f(x)$ over $M$, the method involves finding the $d$-fold antiderivative of $f$ with respect to derivatives projected on $M$, and evaluating it on a discrete set of points on the manifold.
Therefore, at least in the final step, we only really need to know some topological facts about the manifold in order to choose the points such that they are all on the same branch of the antiderivative.
Of course, the manifold also enters into the calculation via the projections of the derivative operator.
For the first integration in the case where $m(x)$ is scalar-valued the projected derivative is given simply by $(\partial m(x) I_d)^{-1} (\partial m(x) I_d) \rinner \partial$, where the first two $\partial$~'s affect only the $m(x)$~'s immediately following them.
Similar formulas can be worked out for more general $m(x)$.
Now, we can use the Taylor series approximation for multivector functions \cite{HitzerCalculus} and approximate both functions $f(x)$ and $m(x)$ by their Taylor series.
If the antiderivatives of all the monomial terms\footnote{
	We need to also expand the inverse appearing in the projection, or to integrate a rational function of multivectors, which cannot be done in quite closed form even for the real numbers, as the roots of the polynomials need to be found in the partial fraction expansion.}
can be explicitly constructed, then this should in principle allow for a systematic series expansion for the values of integrals on a large class of manifolds, in terms of integrals of the monomials.
The theoretical connections to algebraic geometry and topology should prove interesting.

For (vector) differential equations the very same rules for finding antiderivatives that are crucial for our method will be useful in finding closed form solutions in a coordinate invariant way.
In addition, similar series expansion methods as those outlined above should pave the way to finding series expansions for solutions of vector differential equations, and may even aid in their numerical evaluation.

On a philosophical level, our method represents a further step into the direction of establishing multivectors as geometric numbers, which can indeed be constructed, manipulated and interpreted in a wholly coordinate-free way.

\section*{Acknowledgements}

We thank A. Lewandowski and L. Thorlacius for very helpful comments and proofreading of the manuscript. The author is supported in part by Icelandic Research Fund grant
130131-053 and by a grant from the University of Iceland Research Fund.

\bibliographystyle{nb}
\bibliography{GARefs}

\end{document}